\theoremstyle{plain}
\newtheorem*{thm}{Theorem}
\newtheorem*{cor}{Corollary}
\newtheorem*{lem}{Lemma}
\newtheorem*{prop}{Proposition}
\newcommand{\Ann}{\mathrm{Ann}}
\newcommand{\ZZ}{\mathbb{Z}}
\newcommand{\NN}{\mathbb{N}}
\newcommand{\CC}{\mathbb{C}}
\newcommand{\mfm}{\mathfrak{m}}
\newcommand{\mfh}{\mathfrak{h}}
\newcommand{\mfg}{\mathfrak{g}}
\newcommand{\End}[2]{\mathrm{End}_{#1}(#2)}
\newcommand{\Ker}[1]{\mathrm{Ker}(#1)}
\renewcommand{\Im}{\mathrm{Im}}
\title{Injective Modules over Down-up algebras}
\author{Paula A.A.B. Carvalho}
\address{Departamento de Matemática Pura da Faculdade de Ciências da Universidade do Porto, Portugal}
\author{Christian Lomp}
\address{Departamento de Matemática Pura da Faculdade de Ciências da Universidade do Porto, Portugal}
\author{Dilek Pusat-Yilmaz}
\address{Departament of Mathematics, Izmir Institute of Technology, Urla, Turkey}
\thanks{Special thanks to Patrick Smith who suggested the topic at the X. Antalya Algebra Days to the first author.}
\begin{document}

\begin{abstract}
The purpose of this paper is to study finiteness conditions on injective hulls of simple modules over Noetherian Down-Up algebras.
We will show that the Noetherian Down-Up algebras $A(\alpha,\beta,\gamma)$ which are fully bounded are precisely those which are module-finite over a central subalgebra. 
We show that injective hulls of simple $A(\alpha,\beta,\gamma)$-modules are locally Artinian provided the roots of $X^2-\alpha X - \beta$ are distinct roots of unity or both equal to one. \\ \begin{center}{\it dedicated to Patrick F. Smith - teacher and friend}\end{center}
\end{abstract}
\maketitle
\section{Injective hulls of simple modules over noetherian rings}
Injective modules are the building blocks in the theory of Noetherian rings. Matlis showed that any indecomposable injective module
over a commutative Noetherian ring is isomorphic to the injective hull $E(R/P)$ of some prime factor ring of $R$. He also showed that any injective hull of a simple module is Artinian (see \cite{Matlis} and \cite[Proposition 3]{Matlis_DCC}). Vamos characterized commutative rings $R$ whose injective hulls of simples are Artinian as those whose localization $R_M$ by maximal ideals are Noetherian (\cite[Theorem 2]{Vamos}). Not necessarily commutative rings whose injective hulls of simples are Artinian were studied by Jans (\cite{Jans}) and termed {\it co-Noetherian}.
In \cite{Dahlberg} Dahlberg showed that injective hulls of simple modules over $U(\mathfrak{sl}_2)$ are locally Artinian. Since $U(\mathfrak{sl}_2)$ is an instance of a larger class of Noetherian domains, the Down-up Algebras, introduced by Benkart and Roby in \cite{BenkartRoby},
Patrick F. Smith asked which Noetherian Down-Up algebras satisfy this finiteness condition on their injective hulls of simple modules. The purpose of this note is to give a partial answer to Smith's question. Recall that a module is called {\it locally Artinian} if every of its finitely generated submodules is Artinian.

\subsection{}
In connection with the Jacobson Conjecture for Noetherian rings 
Jategaonkar showed in \cite{Jategaonkar_conj} (see also \cite{Cauchon, Schelter}) that the injective hulls of simple modules are locally Artinian provided the ring $R$ is fully bounded Noetherian.
\subsection{} Consider the following property for a ring $A$:
\begin{center}$(\diamond)\:\:$ Injective hulls of simple right $A$-modules are locally Artinian.\end{center} Property $(\diamond)$ is obviously equivalent to
 the condition, that all finitely generated essential extensions of simple right $A$-modules are Artinian. And in case $A$ is right Noetherian property $(\diamond)$ is further equivalent to the class of semi-Artinian right $A$-modules, i.e. modules $M$ that are the union of their socle series, to be closed under essential extensions. In torsion theoretic terms, $A$ has property $(\diamond)$ if and only if the hereditary torsion theory generated by all simple right $A$-modules is stable.

\subsection{} Let us first explain, why every commutative Noetherian ring has property $(\diamond)$ without using Matlis result. The Artin-Rees Lemma says (in one of its versions) that any ideal $I$ of a commutative Noetherian ring $A$ has the Artin-Rees property, i.e. for any essential extension $N\subseteq M$ of finitely generated $A$-modules with $NI=0$, there exists $n>0$ such that $MI^n=0$. Thus $M$ has a finite filtration $$0\subseteq MI^{n-1}\subseteq MI^{n-2}\subseteq \cdots\subseteq MI^2 \subseteq MI \subseteq M$$ such that each of its factors $MI^{k-1}/MI^k$ is a finitely generated $A/I$-module. If $N$ is a simple right $A$-module and $I=\Ann_A(N)$ then $A/I$ is a field, hence Artinian, and so any factor in the filtration of $M$ is Artinian, making $M$ an Artinian module.

A sufficient condition for $(\diamond)$ is therefore that right primitive ideals of $A$ have the Artin-Rees property and that primitive factor rings of $A$ are Artinian.

\subsection{}\label{SemiprimeKrullDimension1} For simple Noetherian algebras, the above argument cannot be used due to the absence of non-zero proper ideals. However if $A$ is a (not necessarily commutative) semiprime Noetherian ring of Krull dimension one, then for any essential right ideal $I$ of $A$, the Krull dimension of $A/I$ is one lower than the Krull dimension of $A$, hence Artinian.
For any extensions $E\subseteq M$ of a simple right $A$-module $E$ by a cyclic right $A$-module $M=A/I$, we have $E\simeq J/I$ with $J/I$ essential in $A/I$. Since pre-images of essential submodules are essential, also $J$ is essential in $A$. Thus $M/E\simeq A/J$ is Artinian and $M$ being an extension of the two Artinian modules $E$ and $M/E$ is also Artinian. Thus any semiprime Noetherian ring of Krull dimension one has property $(\diamond)$. This applies in particular to the first Weyl algebra $A_1(\CC)=\CC[x][y;\partial/\partial x]$.

\subsection{}\label{ReductionToAnnihilators} Let $E\subseteq M$ be an essential extension of a simple right $A$-module $E$ by a Noetherian module $M$. Let $P=\Ann_A(E)$ be its annihilator. Suppose there exists a non-zero central element $x\in P\cap Z(A)$, i.e. $Ex=0$. Denote by $f:M\rightarrow M$ the $A$-linear map $f(m)=mx$. Its kernel is $\Ker f=\Ann_M(x)$. By Fitting's Lemma there exists a number $n>0$ such that $\Im(f^n)\cap \Ker f^n=0$. Since $M$ is uniform and $E\subseteq \Ker f^n$ is non-zero, we have $\Im(f^n)=Mx^n=0$. Hence we have again a finite filtration $$0 \subseteq \Ker f=\Ann_M(x) \subseteq \Ker f^2 \subseteq \cdots \subseteq \Ker f^{n-1} \subseteq \Ker f^n=M$$ whose factors $\Ker f^k/\Ker f^{k-1}$ are $A/Ax$-modules and embed into $\Ann_M(x)$, because $f$ induces monomorphisms
$$M/\Ker f^{n-1} \hookrightarrow \Ker f^{n-1}/\Ker f^{n-2} \hookrightarrow \cdots \hookrightarrow \Ker f^2/\Ker f \hookrightarrow \Ker f.$$
Hence $M$ is Artinian if and only if $\Ann_M(x)=\Ker f$ is Artinian. The same argument also works for $x$ being a normal element. In this case $f$ is not $A$-linear anymore, but preserves submodules (see \cite[Lemma 2]{Jategaonkar}).

\subsection{} The last subsection allow us now to state the following reduction of our problem, in case $A$ has a non-trivial center.

\begin{prop}\label{Proposition_reduction_maximal} The following statements are equivalent for a countably generated Noetherian algebra $A$ with Noetherian center over an algebraically closed uncountable field $K$.
\begin{enumerate}
\item[(a)] Injective hulls of simple right $A$-modules are locally Artinian;
\item[(b)] Injective hulls of simple right $A/\mfm A$-modules are locally Artinian for all maximal ideals $\mfm$ of $Z(A)$.
\end{enumerate}
\end{prop}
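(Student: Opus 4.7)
The implication (a)$\Rightarrow$(b) is a change-of-rings triviality: if $S$ is a simple right $A/\mfm A$-module, then $S$ is simple over $A$ and is annihilated by $\mfm A$, and its $A/\mfm A$-injective hull is canonically identified with $\Ann_E(\mfm A)$, where $E$ is the $A$-injective hull of $S$. Since the $A$- and $A/\mfm A$-submodule lattices of this annihilator coincide, local Artinianness over $A$ passes to local Artinianness over $A/\mfm A$ directly.

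The content lies in (b)$\Rightarrow$(a). Given a simple right $A$-module $S$ and a finitely generated submodule $N$ of its injective hull, one may assume $S\subseteq N$, so $N$ is a Noetherian, uniform, essential extension of $S$. The plan is (i) to use the hypotheses on $K$ to locate a maximal ideal $\mfm$ of $Z(A)$ with $S\mfm=0$, (ii) to iterate the Fitting-type reduction of \ref{ReductionToAnnihilators} along a finite generating set of $\mfm$, and (iii) to invoke (b). For (i) I would apply a Dixmier/Quillen argument: $A$, and hence the cyclic module $S$, has countable $K$-dimension, so any element of the division ring $\End{A}{S}$ is algebraic over $K$ (otherwise $\{(\varphi-\lambda)^{-1}\}_{\lambda\in K}$ would be uncountably $K$-linearly independent in a countable-dimensional space), and algebraic closure of $K$ forces $\End{A}{S}=K$. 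Thus $Z(A)$ acts on $S$ through a $K$-algebra character $\chi\colon Z(A)\to K$, whose kernel $\mfm$ is maximal because any $K$-subalgebra of $K$ equals $K$. Noetherianity of $Z(A)$ then supplies generators $x_1,\ldots,x_k$ of $\mfm$.

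For (ii), applying \ref{ReductionToAnnihilators} successively to the central elements $x_1,\ldots,x_k$ yields that $N$ is Artinian iff $L:=\Ann_N(\mfm)$ is Artinian; at each stage the intermediate annihilator submodule is again Noetherian and remains uniform because it contains $S$ essentially. Since the $x_i$ are central, $L$ is annihilated by the two-sided ideal $\mfm A$ and is therefore a finitely generated $A/\mfm A$-module essentially extending the simple $A/\mfm A$-module $S$; hence $L$ embeds in the $A/\mfm A$-injective hull of $S$, which is locally Artinian by (b). So $L$ is Artinian and therefore so is $N$. The step requiring most care is the Dixmier/Quillen input, where the countable generation of $A$ and the uncountability of the algebraically closed field $K$ are jointly essential; once $\mfm$ has been produced, the rest is a formal iteration of \ref{ReductionToAnnihilators} combined with (b).
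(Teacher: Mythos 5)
Your argument is correct and follows essentially the same route as the paper's: identify $\End{A}{S}=K$ via the countably-generated/uncountable-field hypothesis, deduce that $\mfm=\Ann_A(S)\cap Z(A)$ is a maximal ideal of $Z(A)$, iterate the Fitting-type reduction of \ref{ReductionToAnnihilators} over a finite generating set of $\mfm$, and then apply (b) to $\Ann_N(\mfm)$. The only cosmetic difference is that you reprove the endomorphism property by the Dixmier--Quillen resolvent argument where the paper simply cites \cite[9.1.8]{McConnellRobson}.
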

\begin{proof}
$(a)\Rightarrow (b)$ is clear, since property $(\diamond)$ is inherited by factor rings.

$(b)\Rightarrow (a)$: First note that the Proposition is void if $A$ has trivial center $Z(A)=K$.
Hence we will suppose $Z(A)\neq K$. Moreover note, that any countably generated algebra $A$ over an uncountable field $K$ has the endomorphism property (\cite[9.1.8]{McConnellRobson}).
Hence the endomorphism ring of each simple right $A$-module $A$ is $\End{A}{E}\simeq K$ as $K$ was supposed to be algebraically closed.
Let $E$ be a simple right $A$-module and $M$ be a finitely generated essential extension of $E$.
Denote $P=\Ann_A(E)$ and $\mathfrak{m}=P\cap Z(A)$ which is a maximal ideal of $Z(A)$ as the $A$-action on $E$ restricts to an $Z(A)$-action on $E$ which is not faithful as $\End{A}{E}=K$ and $Z(A)\neq K$.
As $A$ and hence $Z(A)$ are Noetherian, there exist central elements $x_1, \ldots, x_k$ that generate $\mathfrak{m}$.
By \ref{ReductionToAnnihilators} $M$ is Artinian if and only if $M_1=\Ann_M(x_1)$ is Artinian.
Applying the same argument again leads to $M$ Artinian if and only if $M_2=\Ann_{M_1}(x_2)=\Ann_M(\{ x_1,x_2\})$ Artinian.
Iterating $k$ times leads to $M$ being Artinian if and only if $\Ann_M(\{x_1,\ldots, x_k\})=\Ann_M(\mfm)$ being Artinian.
Since $E\subseteq \Ann_M(\mathfrak{m})$ is an essential extension of $A/\mathfrak{m}A$-modules, with $\Ann_M(\mathfrak{m})$ being finitely generated, we get by hypothesis $(b)$, that $\Ann_M(\mathfrak{m})$ is Artinian.
\end{proof}

\subsection{}\label{Heisenberg} Let $\mfh$ be the $3$-dimensional Heisenberg algebra over $\CC$ which is generated by $x,y,z$ with Lie algebra structure given by $[x,y]=z$ and $[x,z]=0=[y,z]$. Let $A=U(\mfh)$. Then $Z(A)=\CC[z]$ and its maximal ideals are of the form $\mfm_\lambda=\langle z-\lambda\rangle$, with $\lambda\in \CC$. For $\lambda=0$, we have that $A/\mfm_0A \simeq \CC[x,y]$ is a commutative Noetherian domain and hence has property $(\diamond)$. For $\lambda \neq 0$, we have $A/\mfm_\lambda A \simeq \CC[x][y; \partial/\partial x]$ is the first Weyl algebra, which is a Noetherian domain of Krull dimension $1$ (see \cite[6.6.15]{McConnellRobson}) and hence also has property $(\diamond)$ by \ref{SemiprimeKrullDimension1}. Hence by Proposition \ref{Proposition_reduction_maximal} we have that $U(\mfh)$ has the property $(\diamond)$.

\subsection{}
In contrast to the Heisenberg algebra, which is a nilpotent Lie algebra, Ian Musson showed that no non-nilpotent soluble finite dimensional complex Lie algebra $\mfg$ has property $(\diamond)$, i.e. there exists a non-Artinian finitely generated essential extension of a simple right $U(\mfg)$-module (\cite{Musson_Counter}).

\subsection{}
In \cite{Dahlberg} Dahlberg showed that $U(\mathfrak{sl}_2)$ has property $(\diamond)$. Since $U(\mathfrak{sl}_2)$ and $U(\mfh)$ are two instances of a larger class of Noetherian domains, the Down-up Algebras, introduced by Benkart and Roby (\cite{BenkartRoby}) we ask which Noetherian Down-Up algebras satisfy property $(\diamond)$.

In the following section we will recall the definition of Down-up algebras and determine when they are fully bounded Noetherian. In the last section we show that some of the Noetherian Down-Up algebras of Krull dimension $2$ have property $(\diamond)$.
For simplicity all algebras are considered to be algebras over the complex numbers $\CC$.

\section{Fully bounded Noetherian Down-up algebras}

The down-up algebras form a three parameter family of associative algebras. For any parameter set $(\alpha,\beta, \gamma)\in\CC^3$ one defines a $\CC$-algebra, denoted by $A=A(\alpha,\beta,\gamma)$, generated by two elements $u$ and $d$ subject to the relations
\begin{eqnarray*}
d^2u &=& \alpha dud + \beta ud^2 + \gamma d\\
du^2 &=& \alpha udu + \beta u^2d + \gamma u
\end{eqnarray*}
\subsection{}
Kirkman, Musson and Passman proved that $A$ is noetherian if and only if it is a domain if and only if $\beta\neq 0$ if and only if $\CC[ud,du]$ is a polynomial ring (see \cite{KirkmanMussonPassman}).
\subsection{}\label{Kulkarni-gwa}
Any Noetherian down-up algebra can be represented as generalized Weyl algebra. Let $R$ be a commutative ring and $\sigma$ and automorphism of $R$ and $x$ an element of $R$, the generalized Weyl algebra is the algebra $R(\sigma,x)$ generated over $R$ in two indeterminates $u, d$ subject to the relations:
$ur=\sigma(r)u, dr=\sigma^{-1}(r)d $ for $r\in R$ and $ud=x, du=\sigma^{-1}(x)$.
In other words $$R(\sigma, x) := R\langle u, d\rangle / \langle ur-\sigma(r)u, dr-\sigma^{-1}(r)d, ud=x, du=\sigma^{-1}(x) \: \forall r\in R\rangle$$
As shown in \cite{KirkmanMussonPassman} and \cite{Kulkarni}, if $\beta\neq 0$, then $A\simeq R(\sigma, x)$ where $R=\CC[x,y]$,
$\sigma(x)=\frac{y-\alpha x - \gamma}{\beta}$ and $\sigma(y)=x$.
The isomorphism maps $ud$ to $x$ and $du$ to $y$. Kulkarni calls a Noetherian down-up algebra a down-up algebra at roots of unity if the associated automorphism $\sigma$ has finite order.

\subsection{}
The center of $R(\sigma, x)$ is generated by the fix ring $R^\sigma$ and the
elements $u^m, d^m$ where $m$ is the order of $\sigma$ or $0$ if the order is infinite (see \cite[2.0.1]{Kulkarni}).
Hence if $\sigma$ has finite order, then $R(\sigma, x)$ is generated by $1, u, \ldots, u^{m-1}, d, \ldots, d^{m-1}$ as a module over its center.
On the other hand if $\sigma$ has infinite order, then the center of $R(\sigma, x)$ is equal to the fix ring $R^\sigma$.
Hence $R(\sigma, x)$ cannot be finitely generated over a central subalgebra since otherwise it would be also finitely generated as a module over $R$ which is impossible since $R(\sigma, x) = \bigoplus_{n\in \ZZ} A_n$ is $\ZZ$-graded with $A_n=Ru^n$ and $A_{-n} = Rd^n$ for $n>0$ and $A_0=R$.
As any finitely generated $R$-submodule of $R(\sigma, x)$ is bounded and $A_n\neq 0$ for all $n$, $R(\sigma, x)$ is not finitely generated over $R$. Thus we proved:

\begin{lem}\label{module-finite-gwa}
$R(\sigma, x)$ is module-finite over a central subalgebra if and only if $\sigma$ has finite order.
\end{lem}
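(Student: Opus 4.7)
The plan is to prove the two directions separately, both leaning on the explicit description of $Z(R(\sigma,x))$ due to Kulkarni that is recalled in the preceding subsection.

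For the easy direction (finite order of $\sigma$ implies module-finite), I would first observe that if $\sigma^m=\mathrm{id}$ then $u^m$ and $d^m$ are central, which drops straight out of the straightening relations $ur=\sigma(r)u$ and $dr=\sigma^{-1}(r)d$. Together with the fix ring $R^\sigma$, these generate a central subalgebra $C$. To show $R(\sigma,x)$ is finite over $C$, I would exploit the $\ZZ$-grading $A_n=Ru^n$ (for $n\geq 0$) and $A_{-n}=Rd^n$ (for $n>0$): modulo the central powers $u^m$ and $d^m$ only finitely many exponents $0,1,\ldots,m-1$ need to be considered, and within each graded piece $R$ itself is module-finite over $R^\sigma$ by the classical theorem on finite group actions on finitely generated commutative algebras. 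A finite generating set over $C$ can then be assembled explicitly from this data.

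For the converse, I would argue by contrapositive: assume $\sigma$ has infinite order, so that Kulkarni's computation gives $Z(R(\sigma,x))=R^\sigma$. If some central subalgebra $C$ made $R(\sigma,x)$ into a finitely generated module, then because $C\subseteq R^\sigma\subseteq R$, the algebra $R(\sigma,x)$ would a fortiori be finitely generated as a module over $R$. I would then invoke the $\ZZ$-grading $R(\sigma,x)=\bigoplus_{n\in\ZZ}Ru^n$: any finite set of $R$-module generators is bounded in degree, whereas every graded piece $Ru^n$ is non-zero, giving the required contradiction. The only substantive point to justify carefully is the appeal to Kulkarni's center computation, in particular the equality $Z(R(\sigma,x))=R^\sigma$ in the infinite-order case; once that is granted, both directions reduce to short grading arguments and no genuine obstacle remains.
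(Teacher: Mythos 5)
Your proposal is correct and follows essentially the same route as the paper: both directions rest on Kulkarni's description of the center ($R^\sigma$ together with $u^m,d^m$, reducing to $R^\sigma$ when $\sigma$ has infinite order) combined with the $\ZZ$-grading $R(\sigma,x)=\bigoplus_n A_n$ with $A_n=Ru^n$, $A_{-n}=Rd^n$. If anything you are slightly more careful than the paper in the finite-order direction, where you explicitly invoke Noether's finiteness theorem to get $R$ module-finite over $R^\sigma$ before assembling the generating set; the paper's generating set $1,u,\ldots,u^{m-1},d,\ldots,d^{m-1}$ over the center tacitly needs this supplement.
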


\subsection{}
Recall that a ring $R$ is called right (resp. left) bounded if every right (resp. left) essential ideal contains a non-zero two-sided ideal. $R$ is called right fully bounded noetherian if it is right Noetherian and every prime factor ring is right bounded. As mentioned in the first section, fully bounded Noetherian rings have property $(\diamond)$. The considerations above deduce now the following characterization of Down-up algebras at roots of unity.

\begin{thm}\label{FBN} The following statements are equivalent for a Noetherian Down-up algebra $A=A(\alpha,\beta,\gamma)$:
\begin{enumerate}
\item $A$ is a down-up algebra at roots of unity;
\item $A$ is module-finite over a central subalgebra;
\item $A$ satisfies a polynomial identity;
\item $A$ is fully bounded Noetherian;
\item The roots of the polynomial $X^2-\alpha X -\beta$ are distinct roots of unity such that both are also different from $1$ if $\gamma\neq 0$.
\end{enumerate}
\end{thm}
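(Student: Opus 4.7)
The plan is to close the logical cycle $(1)\Leftrightarrow(2)\Rightarrow(3)\Rightarrow(4)\Rightarrow(1)$ and to establish $(1)\Leftrightarrow(5)$ separately by a direct analysis of the order of the structural automorphism $\sigma$.

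The equivalence $(1)\Leftrightarrow(2)$ is exactly Lemma~\ref{module-finite-gwa} applied to the generalized Weyl algebra presentation recalled in~\ref{Kulkarni-gwa}. The implication $(2)\Rightarrow(3)$ is the classical observation that an algebra module-finite over a commutative subring embeds into a matrix ring via the regular representation and therefore satisfies the standard polynomial identity $s_{2n}$. The implication $(3)\Rightarrow(4)$ is Cauchon's theorem that every Noetherian PI ring is fully bounded.

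For the equivalence $(1)\Leftrightarrow(5)$, I would analyze $\sigma^{-1}\colon R\to R$, which is the affine map with linear part
\[
M=\begin{pmatrix} 0 & 1 \\ \beta & \alpha \end{pmatrix}
\]
and translation vector $(0,\gamma)^{T}$; its characteristic polynomial equals $X^{2}-\alpha X-\beta$. Since $\beta\neq 0$ the matrix $M$ is never a scalar multiple of the identity, so a repeated root of $X^{2}-\alpha X-\beta$ forces a genuine Jordan block and $M^{n}\neq I$ for every $n>0$. For distinct roots $M$ is diagonalizable and has finite order iff both eigenvalues are roots of unity. Passing from $M$ to the affine map $\sigma^{-1}$ requires a fixed point, i.e.\ a solution of $(I-M)v=(0,\gamma)^{T}$: if $1$ is not an eigenvalue of $M$ then $I-M$ is invertible and the fixed point is automatic, while if $1$ is a simple eigenvalue, a direct computation shows that $\mathrm{Im}(I-M)$ is the line spanned by $(1,-\beta)^{T}$ and that $(0,\gamma)^{T}$ lies on it precisely when $\gamma=0$. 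Assembling these cases reproduces condition~$(5)$.

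The main obstacle I anticipate is $(4)\Rightarrow(1)$, which I would prove contrapositively: assuming $\sigma$ has infinite order, I would exhibit a simple right $A$-module of infinite $\CC$-dimension, contradicting FBN. On the FBN side, in any Noetherian FBN ring every right primitive ideal is maximal with simple Artinian quotient, and combined with the endomorphism property (McConnell--Robson~9.1.8), applicable because $A$ is countably generated over the uncountable algebraically closed field $\CC$, each primitive quotient is isomorphic to some $M_{n}(\CC)$, forcing every simple right $A$-module to be finite-dimensional over $\CC$. On the GWA side, from $A=R(\sigma,x)$ with $R=\CC[X_{1},X_{2}]$ one constructs, by the standard weight module recipe for generalized Weyl algebras, simple modules indexed by suitable $\sigma$-orbits on $\max R$; since $\sigma$ has infinite order and $\CC$ is uncountable, the periodic points of $\sigma$ and the vanishing loci of the finitely many $\sigma^{i}(X_{1})$ that enter the construction form a countable union of proper subvarieties that does not exhaust $\CC^{2}$, so one finds a maximal ideal whose orbit is infinite and along which the construction yields a simple module with $\CC$-basis indexed by the orbit, hence infinite-dimensional. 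This contradicts the FBN conclusion and closes the cycle.
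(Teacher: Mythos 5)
Your proposal is correct in outline, but it takes a genuinely different route from the paper on the one hard implication, $(4)\Rightarrow(1)$. The paper localizes $A$ at the Ore set $\{d^n\}$ to obtain the skew Laurent ring $\CC[x,y][z,z^{-1};\theta]$, invokes the fact that the FBN property passes to such localizations (Cauchon/Krause, or [PaulaThesis, 4.1.8]), and then cites a result that an FBN skew Laurent extension forces $\theta$ to have finite order; a conjugation argument transfers this back to $\sigma$. You instead argue contrapositively by producing an infinite-dimensional simple module from a non-periodic, break-free $\sigma$-orbit in $\max\CC[x,y]$, against the fact that FBN plus the endomorphism property forces every primitive factor to be of the form $M_n(\CC)$. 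Both halves of your argument are sound: a right bounded right Noetherian primitive ring is simple Artinian (if the annihilator of the faithful simple were essential it would contain a nonzero ideal, so the socle is nonzero and the Goldie regular element argument finishes), and the degenerate weight module over the generalized Weyl algebra $R(\sigma,x)$ attached to a maximal ideal $\mfm$ with infinite orbit and $x\notin\sigma^n(\mfm)$ for all $n$ is simple and has a basis indexed by $\ZZ$. Your approach buys independence from the localization machinery and from the unpublished thesis references, and it produces an explicit witness (an infinite-dimensional simple module); the cost is that you must develop the weight-module theory of generalized Weyl algebras, which the paper avoids entirely. Three small points you should make explicit when writing this up: the bad locus is a \emph{countable} (not finite) union of proper closed subsets, namely the fixed-point sets of the $\sigma^n\neq\mathrm{id}$ (proper because an automorphism of an affine $\CC$-algebra fixing every maximal ideal is the identity) together with the zero loci of the $\sigma^n(x)$; the simplicity of the weight module needs the orbit points to be pairwise distinct so that a submodule decomposes along weight spaces; and in your $(1)\Leftrightarrow(5)$ analysis you use tacitly that a finite-order affine transformation of $\CC^2$ has a fixed point (average an orbit), which is what makes the obstruction $\gamma\neq 0$ decisive when $1$ is a simple eigenvalue. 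That last computation is essentially equivalent to the paper's four-case Jordan-form analysis of $\theta$ on $\mathrm{span}(1,x,y)$, just packaged as linear part plus translation.
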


\begin{proof}
$(1)\Leftrightarrow (2)$ follows from Lemma \ref{module-finite-gwa}.
$(2)\Rightarrow (3)$ any module-finite algebra over a commutative subalgebra satisfies a polynomial identity (see for instance \cite[13.4.9]{McConnellRobson}).

$(3)\Rightarrow (4)$ any Noetherian algebra that satisfies a polynomial identity is fully bounded Noetherian (see for instance \cite[13.6.6]{McConnellRobson}).

$(4) \Rightarrow (1)$ we will show that $A$ is a down-up algebra at roots of unity.

Note that by \cite[p. 287]{CarvalhoMusson} any Noetherian Down-up algebra $A$ can be embedded into the skew Laurent polynomial ring $R[z,z^{-1}; \theta]$ where $R=\CC[x,y]$, $\theta(x)=y$ and $\theta(y)=\alpha y + \beta x + \gamma$. As a right $R$-module $R[z,z^{-1};\theta]$ is free on the basis $\{z^n \mid n\in \NN\}$ and the multiplication in $S$ is defined by $rz=z\theta(r)$ for $r\in R$. The embedding $\iota: A\rightarrow R[z,z^{-1};\theta]$ is given by $\iota(d)=z^{-1}$ and $\iota(u)=xz$, so that $\iota(ud)=x$ and $\iota(du)=y$.
By the proof of \cite[Lemma 1.2]{CarvalhoMusson}, $R[z,z^{-1};\theta]$ is the localization of $A$ by the Ore set $\{d^n \mid n\in \NN\}$.
By \cite[4.1.8]{PaulaThesis} (or by \cite[Proposition 1.5]{Cauchon} and \cite[Theorem 3.5]{Krause}), if $A$ is fully bounded Noetherian, then also $A_d$, hence $R[z,z^{-1};\theta]$. By \cite[Proposition 4.1.12]{PaulaThesis}, $\theta$ must have finite order. Since $A\simeq A'=A(-\alpha\beta^{-1}, \beta^{-1}, -\gamma\beta^{-1})$ by \cite[Lemma 4.1]{CarvalhoMusson}, also $\theta'$ has finite order, where $\theta'$ is the automorphism of $\CC[x,y]$ defined analogously by $\theta'(x)=y$ and $\theta'(y)=(-\alpha\beta^{-1})y + (-\beta^{-1})x + (-\gamma\beta^{-1})$. Denoting by $\tau$ the automorphism of $\CC[x,y]$ which interchanges $x$ and $y$, we have that $\sigma=\tau\theta'\tau$ has finite order, where $\sigma$ equals the automorphism that represents $A$ as a generalized Weyl algebra as in \ref{Kulkarni-gwa}. Hence $A$ is a down-up algebra at roots of unity.

$(4)\Leftrightarrow (5)$ Let $r_1, r_2$ be the roots of the polynomial $X^2-\alpha X - \beta$ and let $\theta$ be the automorphism that defines the skew Laurent ring $R[z,z^{-1}; \theta]$ as above, with $R=\CC[x,y]$. Note that $\theta$ stabilizes the vector space $V$ spanned by $1, x$ and $y$. In \cite[p.288-289]{CarvalhoMusson} a basis $1, w_1, w_2$ for $V$ had been found such that the matrix of $\theta$ with respect to this basis is in Jordan canonical form. Four cases had to be considered: if both roots $r_1$ and $r_2$ are different and also different from $1$, then there exists such a basis such that $\theta(w_i)=r_iw_i$ for $i=1,2$. Hence $\theta$ has finite order if and only if both roots are roots of unity.

If $r_1=1$ and $r_2\neq 1$, then there exists a basis such that $\theta(w_1)=w_1+\gamma$ and $\theta(w_2)=r_2w_2$. Hence $\theta$ has finite order if and only if $\gamma=0$ and $r_2$ is a root of unity.

If both roots are the same $r=r_1=r_2$ but different from $1$, then there exists a basis such that $\theta(w_1)=rw_1$ and
$\theta(w_2)=rw_2+w_1$. Hence for any $n$, $\theta^n(w_1)=r^nw_1$ and $\theta^n(w_2)=r^nw_2 + nr^{n-1}w_1$. Hence $\theta$ cannot have finite order. Similarly, if both roots are $1$, there exists a basis such that $\theta(w_1)=w_1+\gamma$ and $\theta(w_2)=w_2+w_1$ that implies that $\theta$ will not have finite order.
\end{proof}

\section{Non-primitive Down-Up algebras of Krull dimension two}

A theorem of Bavula and Lenagan states, that the Krull dimension of $A=A(\alpha,\beta,\gamma)$ is $2$ if and only if $\alpha+\beta=1$ and $\gamma\neq 0 \neq \beta$; otherwise the Krull dimension is $3$ (see \cite[Theorem 4.2]{BavulaLenagan}).
Equivalently $A$ has Krull dimension $2$ precisely if $\gamma,\beta \neq 0$ and $1$ is a root of $X^2-\alpha X-\beta$. We will focus in this section on those Down-Up algebras which are denoted by $A_\eta$ in \cite{CarvalhoMusson}: $A_\eta := A(1+\eta, -\eta, 1)$ for $\eta\in \CC^\times=:\CC\setminus\{0\}$.

\subsection{}
By \cite[Theorem 1.3(g)]{Zhao} the center of the algebras $A_\eta$, with $\eta$ being a primitive N-th root of unity different from $1$, is a polynomial ring $\CC[\omega]$ in one variable, where $\omega$ is the element $\omega=z^N$ with $z=du-ud+\frac{\gamma}{\eta-1}$. Note that if $\eta$ is not a root of unity, then the center of $A_\eta$ is trivial. We will apply Proposition \ref{Proposition_reduction_maximal} to prove the following:

\begin{thm}\label{A_eta} $A_\eta$ satisfies $(\diamond)$ if $\eta$ is a root of unity different from $1$.
\end{thm}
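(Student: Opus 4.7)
My plan is to apply Proposition~\ref{Proposition_reduction_maximal}. The algebra $A_\eta$ is finitely generated Noetherian over the algebraically closed uncountable field $\CC$, and by Zhao's result cited above its center is the polynomial ring $Z(A_\eta)=\CC[\omega]$ with $\omega=z^N$, $z=du-ud+\frac{1}{\eta-1}$, hence Noetherian. In the GWA presentation $A_\eta\simeq R(\sigma,x)$ with $R=\CC[x,y]$ a direct computation gives $\sigma(z)=\eta^{-1}z$, so $uz=\eta^{-1}zu$ and $dz=\eta zd$; thus $z$ is a normal element of $A_\eta$. The maximal ideals of $Z(A_\eta)$ are $\mfm_\lambda=\langle\omega-\lambda\rangle$, so it suffices to verify property $(\diamond)$ for each factor $B_\lambda:=A_\eta/(\omega-\lambda)A_\eta$, in which the image of $z$ is normal with $z^N=\lambda$; by the Bavula--Lenagan theorem $A_\eta$ has Krull dimension two and so $B_\lambda$ has Krull dimension at most one.

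For $\lambda=0$ the element $z$ is nilpotent in $B_0$. Given a finitely generated essential extension $E\subseteq M$ with $E$ simple, the submodule $Ez$ of $E$ must be zero (otherwise $E=Ez^N=0$), so $z\in\Ann(E)$, and by~\ref{ReductionToAnnihilators} the Artinianness of $M$ reduces to that of $\Ann_M(z)$. Now $\Ann_M(z)$ is an essential extension of $E$ over $A_\eta/zA_\eta$; setting $z=0$ in the GWA picture forces $y=x-\frac{1}{\eta-1}$, the induced automorphism on $R/zR\simeq\CC[x]$ becomes the shift $\bar\sigma(x)=x+\frac{1}{\eta-1}$, and one reads off the relation $ud-du=\frac{1}{\eta-1}\neq 0$, so that $A_\eta/zA_\eta$ is (up to rescaling $d$) the first Weyl algebra $A_1(\CC)$. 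Since $A_1(\CC)$ is simple Noetherian of Krull dimension one, it has $(\diamond)$ by~\ref{SemiprimeKrullDimension1}, and hence $M$ is Artinian.

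For $\lambda\neq 0$ the element $z$ is a unit in $B_\lambda$, so the annihilator reduction is unavailable. The plan is to show that $B_\lambda$ is a semiprime Noetherian ring of Krull dimension one and to invoke~\ref{SemiprimeKrullDimension1} directly. Under the change of variables $y\leftrightarrow z$, the quotient $R/(z^N-\lambda)R\simeq\prod_{k=1}^N\CC[x]$ decomposes into $N$ components indexed by the $N$-th roots of $\lambda$; because $\sigma(z)=\eta^{-1}z$ and $\eta$ is a primitive $N$-th root of unity, $\bar\sigma$ cyclically permutes these components, each step composed with an $x$-translation. Standard GWA theory, via Morita reduction through this cyclic permutation, identifies $B_\lambda$ with a matrix algebra over a GWA on a single copy of $\CC[x]$ for the translation automorphism $\bar\sigma^N$, which is simple Noetherian of Krull dimension one (essentially a first Weyl algebra). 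Hence $B_\lambda$ is semiprime of Krull dimension one and~\ref{SemiprimeKrullDimension1} completes this case. The main obstacle is precisely this $\lambda\neq 0$ case, where one must carry out the Morita reduction rigorously and verify the (semi)primeness of the resulting algebra; by contrast, the nilpotent case $\lambda=0$ is a clean application of the normal-element reduction combined with the explicit identification of $A_\eta/zA_\eta$ as the first Weyl algebra.
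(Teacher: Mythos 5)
Your overall strategy coincides with the paper's: reduce via Proposition~\ref{Proposition_reduction_maximal} to the factors $B_\lambda=A_\eta/(\omega-\lambda)A_\eta$, handle $\lambda=0$ by the normal-element reduction along $z$, and handle the remaining factors by showing they are semiprime Noetherian of Krull dimension at most one so that \ref{SemiprimeKrullDimension1} applies. Where you diverge is in how you certify that the relevant factor rings are (semi)prime of Krull dimension one: the paper simply invokes Praton's theorem \cite[Theorem 8.1(C1)]{Praton}, which says that the ideals $(\omega-c)A_\eta$ for $c\in\CC^\times$ and $zA_\eta$ are primitive, so the corresponding factors are prime Noetherian of Krull dimension at most one; you instead compute the factors explicitly in the generalized Weyl algebra picture. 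Your $\lambda=0$ branch is complete and correct: $\sigma(z)=\eta^{-1}z$ does hold, $Ez$ is a submodule of the simple module $E$ and is killed by the argument $E=Ez=\cdots=Ez^N=E\omega=0$, and $A_\eta/zA_\eta\simeq\CC[x](\bar\sigma,x)$ with $\bar\sigma(x)=x+\frac{1}{\eta-1}$ is indeed a first Weyl algebra; this is exactly the conclusion the paper reaches (via primitivity of $zA_\eta$) before applying \cite[Lemma 2]{Jategaonkar}.

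The genuine gap is the $\lambda\neq 0$ branch, which you yourself flag: you assert, but do not prove, that $B_\lambda$ is a matrix algebra over a generalized Weyl algebra $\CC[x](\bar\sigma^N,p)$ for the translation $\bar\sigma^N(x)=x+\frac{N}{\eta-1}$, and you moreover claim simplicity of that base algebra, which is more than you need and may fail for particular $\lambda$ (the base is $\CC[x](\tau,p)$ with $p$ of degree $N$, and such algebras need not be simple). What you actually need is only that $B_\lambda$ is semiprime Noetherian of Krull dimension at most one: semiprimeness would follow from your matrix-ring-over-a-domain description once the Morita/matrix reduction for a GWA over $\prod_{k=1}^N\CC[x]$ with cyclically permuting automorphism is carried out in detail, and the Krull dimension bound already follows from $A_\eta$ being a prime Noetherian ring of Krull dimension two, since every nonzero two-sided ideal of a prime ring is an essential right ideal. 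So your sketch is closable, but as written it is an announced plan rather than a proof. The paper's shortcut -- citing Praton's primitivity of $(\omega-c)A_\eta$ -- disposes of this case in one line and is the most economical way to fill your gap.
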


\begin{proof} Let $\eta$ be a primitive $N$th root of unity different from $1$. We intend to use Proposition \ref{Proposition_reduction_maximal}.
As mentioned before $Z(A_\eta)=\CC[\omega]$. The maximal ideals of $\CC[\omega]$ are of the form $\langle \omega-c\rangle$ with $c\in \CC$. By \cite[Theorem 8.1(C1)]{Praton} any ideal of the form $(\omega-c)A_\eta$ with $c\in\CC^\times$ is right primitive.
Hence $A_\eta/(\omega-c)A_\eta$ is a right primitive Noetherian ring of Krull dimension $1$ and hence has property $(\diamond)$ by \ref{SemiprimeKrullDimension1}.
For $c=0$, let $B=A_\eta/\omega A_\eta$. We have that $\omega=z^N$ with $z=du-ud+\frac{\gamma}{\eta-1}$. As $z$ is a normal element of $A_\eta$ it is also normal in $B$. By \cite[Theorem 8.1(C1)]{Praton} $zA_\eta$ is a primitive right ideal of $A_\eta$ and hence also of $B$. Thus $B/zB$ is a primitive Noetherian ring of Krull dimension $1$ and has property $(\diamond)$ again by \ref{SemiprimeKrullDimension1}. Given any essential extension $E\subseteq M$ of finitely generated $B$-modules, with $E$ being simple, we first note, that $zE=0$, since otherwise $E=zE=\cdots = z^NE = \omega E = 0$ - a contradiction. Since $B/zB$ satisfies $(\diamond)$, $\Ann_M(z)$ is Artinian and by \cite[Lemma 2]{Jategaonkar} $M$ is Artinian.

This shows that any factor $A_\eta/\mfm A_\eta$ by a maximal ideal $\mfm$ of $Z(A_\eta)$ has property $(\diamond)$. By Proposition \ref{Proposition_reduction_maximal} $A_\eta$ satisfies $(\diamond)$.
\end{proof}

 \subsection{}
 Summarizing Theorem \ref{FBN} and Theorem \ref{A_eta} we have the following:

 \begin{cor} The injective hull of any simple right $A(\alpha,\beta,\gamma)$-module is locally Artinian, if the roots of $X^2-\alpha X - \beta$ are distinct roots of unity or both equal to one.
 \end{cor}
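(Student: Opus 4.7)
The plan is to deduce the corollary by a case analysis on the roots $r_1, r_2$ of $p(X) = X^2 - \alpha X - \beta$, combining Theorem \ref{FBN} and Theorem \ref{A_eta}. The hypothesis places us in one of two regimes: either $r_1 \neq r_2$ are both roots of unity, or $r_1 = r_2 = 1$.

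In the distinct-roots-of-unity regime, I would split further according to whether condition (5) of Theorem \ref{FBN} is already satisfied. If $\gamma = 0$, or if both $r_i \neq 1$, then (5) holds directly and Theorem \ref{FBN} gives that $A(\alpha,\beta,\gamma)$ is fully bounded Noetherian, so property $(\diamond)$ follows from Jategaonkar's theorem recalled in the first subsection. The remaining subcase has $\gamma \neq 0$ with one root equal to $1$, say $r_1 = 1$ and $r_2 = \eta$ with $\eta$ a root of unity distinct from $1$; then $\alpha = 1+\eta$ and $\beta = -\eta$. A direct check of the two defining relations shows that the rescaling $u \mapsto \lambda u$, $d \mapsto \mu d$ preserves $\alpha$ and $\beta$ and multiplies $\gamma$ by $\lambda\mu$; choosing $\lambda\mu = 1/\gamma$ produces an isomorphism $A(1+\eta,-\eta,\gamma) \cong A(1+\eta,-\eta,1) = A_\eta$, and Theorem \ref{A_eta} then supplies $(\diamond)$.

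The remaining case $r_1 = r_2 = 1$, corresponding to $A = A(2,-1,\gamma)$, lies outside the literal scope of both Theorem \ref{FBN} and Theorem \ref{A_eta}, and is where the main obstacle sits. The strategy is to mimic the proof of Theorem \ref{A_eta}: apply Proposition \ref{Proposition_reduction_maximal} to reduce property $(\diamond)$ to the factors $A/\mfm A$ for each maximal ideal $\mfm$ of $Z(A)$, and then check that each such factor is either commutative Noetherian (handled as in the Heisenberg example of \ref{Heisenberg}) or a Noetherian ring of Krull dimension at most one (handled by \ref{SemiprimeKrullDimension1}). The technical difficulty, compared with Theorem \ref{A_eta}, is that the normal element $z = du-ud+\gamma/(\eta-1)$ used there degenerates at $\eta = 1$; one must compute the center of $A(2,-1,\gamma)$ explicitly and exhibit a replacement normal element cutting out the primitive factors before Proposition \ref{Proposition_reduction_maximal} can be invoked.
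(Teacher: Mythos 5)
Your treatment of the first two regimes is correct and matches the paper's: when condition (5) of Theorem \ref{FBN} holds (both roots distinct roots of unity, and both different from $1$ whenever $\gamma\neq 0$) you invoke full boundedness and the Jategaonkar--Schelter theorem, and when $\gamma\neq 0$ with one root equal to $1$ and the other a root of unity $\eta\neq 1$ you rescale to $A_\eta$ and apply Theorem \ref{A_eta}. The paper asserts $A\simeq A_\eta$ there without writing out the rescaling $u\mapsto\lambda u$, $d\mapsto\mu d$; making it explicit is a harmless small improvement.

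The genuine gap is in the case $r_1=r_2=1$, i.e. $A=A(2,-1,\gamma)$. You correctly flag it as the case covered by neither theorem, but you then only announce a program --- compute $Z(A(2,-1,\gamma))$, find a normal element replacing the degenerate $z=du-ud+\gamma/(\eta-1)$, and rerun the argument of Theorem \ref{A_eta} via Proposition \ref{Proposition_reduction_maximal} --- without carrying out any step of it. That is a strategy, not a proof, and as written the corollary remains unproved in this case. The observation that actually closes it, and which you miss, is that these algebras are already known objects: $A(2,-1,1)\simeq U(\mathfrak{sl}_2)$ (and $A(2,-1,\gamma)\simeq A(2,-1,1)$ for every $\gamma\neq 0$ by the very rescaling you wrote down), for which property $(\diamond)$ is Dahlberg's theorem \cite{Dahlberg}, while $A(2,-1,0)\simeq U(\mfh)$ is the enveloping algebra of the Heisenberg Lie algebra, for which $(\diamond)$ is established in \ref{Heisenberg} using Proposition \ref{Proposition_reduction_maximal}. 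So no new computation is needed; the case reduces to two results already in hand. Executing your program for $\gamma\neq 0$ would essentially amount to reproving Dahlberg's theorem (the normal elements you seek are $\Omega-c$ for the Casimir element $\Omega$), but none of that appears in your write-up, so the argument is incomplete where it matters most.
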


 \begin{proof}
 If the roots of $X^2-\alpha X - \beta$ are distinct roots of unity and also different from $1$ if $\gamma\neq 0$, then $A=A(\alpha,\beta,\gamma)$ is fully bounded Noetherian by Theorem \ref{FBN}. A classical result by Schelter and Jategaonkar says that the injective hull of a simple right $R$-module over a left Noetherian right fully bounded Noetherian ring $R$ is locally Artinian (see for instance \cite[9.12]{GoodearlWarfield} or \cite[Proposition 6.4.14]{McConnellRobson}).

 Suppose $\gamma\neq 0$ and that one of the roots is $1$, then $A\simeq A_\eta$ and Theorem \ref{A_eta} shows that $A$ has property $(\diamond)$.
 In case both roots are $1$, then $\alpha=2$ and $\beta=-1$. Since $A_1=A(2,-1,1)=U(\mathfrak{sl}_2)$ and $A(2,-1,0)=U(\mfh)$ those algebras have property $(\diamond)$ by \cite{Dahlberg} and \ref{Heisenberg}.
 \end{proof}

\subsection{}
We were unable to find an example of a Noetherian Down-Up algebra that does not satisfy $(\diamond)$.

\end{document}